\newcommand{\Cov}[0]{\text{Cov}}
\newcommand{\Var}[0]{\text{Var}}
\newcommand{\R}[0]{\mathbb{R}}
\newcommand{\E}[0]{\mathbb{E}}
\newcommand{\Prob}[0]{\mathbb{P}}
\newcommand{\bigindex}[1]{\uppercase\expandafter{\romannumeral#1}}
\theoremstyle{plain}
\newtheorem{thm}{Theorem}[section]
\newtheorem{lem}[thm]{Lemma}
\newtheorem{prop}[thm]{Proposition}
\theoremstyle{definition}
\theoremstyle{remark}
\newtheorem{rmk}{Remark}
\long\def\hid#1*/{}  
\definecolor{Blue}{rgb}{0,0,1}
\definecolor{Red}{rgb}{1,0,0}
\begin{document}
\begin{center}
\large{\bf Simultaneous Inference for High Dimensional Mean Vectors}
\end{center}

\begin{center}
{By Zhipeng Lou and Wei Biao Wu}
\end{center}

\begin{center}
{Department of Statistics, University of Chicago}
\end{center}

\begin{center}
{\today}
\end{center}

\begin{center}
\abstract{
Let $X_1, \ldots, X_n\in\R^p$ be i.i.d.~random vectors. We aim to perform simultaneous inference for the mean vector $\E (X_i)$ with finite polynomial moments and an ultra high dimension. Our approach is based on the truncated sample mean vector. A Gaussian approximation result is derived for the latter under the very mild finite polynomial ($(2+\theta)$-th) moment condition and the dimension $p$ can be allowed to grow exponentially with the sample size $n$. Based on this result, we propose an innovative resampling method to construct simultaneous confidence intervals for mean vectors.
}
\end{center}

\section{Introduction}
Let $X_1, \ldots, X_n$ be i.i.d.~random vectors in $\R^p$ with mean vector $\E X_i = \mu$ and covariance matrix $\Cov (X_i) = \Sigma$. We are interested in conducting statistical inference for $\mu$ when the dimension $p$ can be comparable to or even much larger than $n$. Estimating $\mu$ by the traditional sample mean $\hat{\mu} = \sum_{i=1}^n X_i/n$, \cite{CCK2013} and \cite{CCK2014} stated that under suitable moment conditions, as $n\rightarrow \infty$ and possibly $p = p_n\rightarrow \infty$,
\begin{align}
\label{defkolmogorov}
\rho_n := \sup_{t\in\R}\big|\Prob(\sqrt{n}|\hat{\mu} - \mu|_\infty\leq t) - \Prob(|Y|_\infty\leq t)\big|\rightarrow 0.
\end{align}
where $Y = (Y_1, \ldots, Y_p)^\top\in\R^p$ is a centered Gaussian vector with $\Cov (Y) = \Sigma$ and $|x|_\infty = \max_{j\leq p}|x_j|$ is the usual $\ell_\infty$ norm for a vector $x = (x_1, \ldots, x_p)^\top \in\R^p$. Suppose we only have uniformly finite $q$-th ($q>3$) moment for each coordinate of $X_i$, i.e., there exists a constant $C > 0$ such that 
\begin{equation}
\label{eq:qmt}
\max_{j\leq p}\E |X_{ij}|^q \le C
\end{equation}
for some $q>3$. We can show that the condition
\begin{equation}
\label{polypn}
p (\log p)^{3q/2 -1} = o(n^{q/2 -1})
\end{equation}
is nearly optimal for (\ref{defkolmogorov}); see Proposition \ref{polyprop}. If (\ref{polypn}) is barely violated, then under (\ref{eq:qmt}) we can have $\rho_n\rightarrow 1$; see (\ref{eq:plarge}) and (\ref{eq:rho1}). Hence in general, the allowed dimension $p$ can be at most a power of $n$ if we use $\hat{\mu}$ for inference of $\mu$. In this paper, we propose a new approach to perform simultaneous inference for the mean vector with finite polynomial moments and show that our method applies under ultra high dimensional settings, in which $\log p = o(n^c)$ for some $c>0$.

In Section \ref{sectionGA}, we shall study properties of the truncated sample mean with element-wise truncation. A similar idea was adopted by \cite{Fan2016} in the estimation of covariance matrices. We shall establish a Gaussian approximation result for the truncated sample mean vector with uniformly finite $(2+\theta)$-th ($0<\theta\leq 1$) moments for $X_{ij}$, $1 \leq j \leq p$. As an important feature, the dimension $p$ can be as large as $ e^{o(n^{c})}$ for some $c>0$.

Equipped with the Gaussian approximation result for the truncated sample mean vector, we are ready to perform simultaneous statistical inference for $\mu$. In particular, we propose an innovative resampling method called truncated half sampling procedure to construct simultaneous confidence intervals of $\mu$ in Section \ref{sectionSCI}. As a main advantage of this method, we do not need to deal with the problem of estimating the covariance matrices which is highly nontrivial and computationally intensive in the high dimensional case, and which may require extra structural assumptions.

\section{The truncated sample mean vector and a Gaussian approximation theory}
\label{sectionGA}
Given any $\kappa>0$, let $t_\kappa : \R \rightarrow [-\kappa, \kappa]$ be the truncated function defined by $t_\kappa(x) = (x \wedge \kappa) \vee (-\kappa)$. Given the data $X_1, X_2, \ldots, X_n$, define the truncated sample mean vector
\begin{equation}
\label{definitionofmu}
\hat{\mu}_\kappa = \frac{1}{n}\sum_{i=1}^n t_\kappa(X_i),
\end{equation}
where $t_\kappa(X_i) = (t_k(X_{i1}), \ldots, t_\kappa(X_{ip}))^\top\in\R^p$. With properly chosen truncated level $\kappa$, we establish a Gaussian approximation result for $|\hat{\mu}_\kappa|_\infty$ with uniformly finite $(2+\theta)$-th moments for $X_{ij}$, $1 \leq j \leq p$.
\begin{thm}
\label{theoremGA}
Let $\mu = \E X_i = 0$. Assume there exist constants $b, M_\theta >0$ such that $\min\nolimits_{j\leq p}\E |X_{ij}|^2\geq b$ and $M_\theta = \max\nolimits_{j\leq p}\E |X_{ij}|^{2+\theta} < \infty$ for some $0< \theta\leq 1$. Further assume
\begin{equation}
\label{condpandn}
(\log p)^{4+3\theta} M_\theta^2= o(n^{\theta}).
\end{equation}
and we take $\kappa \asymp (n M_\theta/\log p)^{1/(2+\theta)}$. Then as $n\rightarrow \infty$,
\begin{equation}
\label{GAconsistency}
\rho_{n,\kappa} :=\sup_{t\in\R}|\Prob(\sqrt{n}|\hat{\mu}_\kappa|_\infty\leq t) - \Prob(|Y|_\infty\leq t)|\rightarrow 0.
\end{equation}
\end{thm}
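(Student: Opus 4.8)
The plan is to reduce the statement to known high-dimensional Gaussian approximation and Gaussian comparison inequalities for sums of \emph{bounded} independent vectors (of Chernozhukov--Chetverikov--Kato type), after paying for the bias and the covariance distortion created by the truncation. Write $\tilde X_{ij} = t_\kappa(X_{ij}) - \beta_j$ with $\beta_j = \E[t_\kappa(X_{ij})]$, so that $\sqrt n\,\hat\mu_{\kappa,j} = S_j + \sqrt n\,\beta_j$, where $S_j = n^{-1/2}\sum_{i=1}^n \tilde X_{ij}$. The vectors $\tilde X_i = (\tilde X_{ij})_j$ are i.i.d., mean zero, and bounded, $|\tilde X_{ij}|\le 2\kappa$. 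The proof then rests on three ingredients: (i) the scaled bias $\sqrt n\,\beta_j$ is uniformly negligible; (ii) the law of $|S|_\infty$ is close to that of $|\tilde Y|_\infty$, where $\tilde Y\sim N(0,\tilde\Sigma)$ with $\tilde\Sigma = \Cov(t_\kappa(X_i))$; and (iii) $\tilde\Sigma$ is uniformly close to $\Sigma$, so $|\tilde Y|_\infty$ may be replaced by $|Y|_\infty$. The choice $\kappa\asymp (nM_\theta/\log p)^{1/(2+\theta)}$ is dictated by balancing a larger $\kappa$ (smaller bias and covariance distortion) against a smaller $\kappa$ (tighter boundedness and hence a sharper Gaussian-approximation rate).

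For (i), since $\E X_{ij}=0$ we have $\beta_j = \E[t_\kappa(X_{ij}) - X_{ij}]$, and $|t_\kappa(x)-x| = (|x|-\kappa)_+ \le |x|\,\ind{|x|>\kappa}$; Markov's inequality at order $2+\theta$ gives $\max_{j\le p}|\beta_j|\le M_\theta\kappa^{-(1+\theta)}$. With the stated $\kappa$ this yields $\sqrt n\max_j|\beta_j|\lesssim n^{-\theta/(2(2+\theta))} M_\theta^{1/(2+\theta)}(\log p)^{(1+\theta)/(2+\theta)}$. The same truncation estimate, applied to $X_{ij}^2 - t_\kappa(X_{ij})^2 \le |X_{ij}|^{2+\theta}\kappa^{-\theta}\ind{|X_{ij}|>\kappa}$, shows $\Var(t_\kappa(X_{ij}))\ge b - M_\theta\kappa^{-\theta} - \beta_j^2 \ge b/2$ for $n$ large, so the truncated coordinates retain a uniform variance lower bound; this is what licenses the anti-concentration and Gaussian comparison bounds below.

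For (ii), I would apply the high-dimensional central limit theorem for maxima of sums of independent bounded vectors to $S$, after the standard device of passing to the $2p$-dimensional vector with coordinates $(\tilde X_{ij}, -\tilde X_{ij})$, so that $|S|_\infty$ becomes a maximum over coordinates; since $\log(2p)\asymp\log p$ this leaves the conditions unchanged. With a.s.\ bound $2\kappa$ and variances $\ge b/2$, the resulting rate is a fixed power of $\kappa^2(\log p)^{a}/n$ for some $a>0$, which under the stated $\kappa$ reduces to a quantity governed by (\ref{condpandn}). For (iii), I would bound $\Delta := \max_{j,k}|\tilde\Sigma_{jk}-\Sigma_{jk}|$ by writing $t_\kappa(X_{ij})t_\kappa(X_{ik}) - X_{ij}X_{ik}$ as a two-term telescoping difference and using $|t_\kappa(X_{ik})|\le\kappa$ together with the tail bound above, giving $\Delta\lesssim M_\theta\kappa^{-\theta}$; the Gaussian comparison inequality then costs $O\big(\Delta^{1/3}(\log p)^{2/3}\big)$. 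Finally, the scaled bias is absorbed through the anti-concentration inequality for $|Y|_\infty$, which under the variance lower bound $b$ bounds the effect of shifting the threshold by $\delta_n := \sqrt n\max_j|\beta_j|$ by $O(\delta_n\sqrt{\log p})$.

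The main obstacle is that only $(2+\theta)$ moments are assumed, so no high-dimensional CLT applies to $X_i$ directly; truncation is what produces bounded summands, but the delicate point is the simultaneous control and optimal balancing of the three resulting error terms. Tracking the exponents shows that, after inserting the stated $\kappa$, each error is of the form (power of $n$)$\cdot$(power of $M_\theta$)$\cdot$(power of $\log p$), and condition (\ref{condpandn}) is precisely calibrated so that all three vanish. Indeed, the anti-concentration-weighted bias $\delta_n\sqrt{\log p}$ and the covariance comparison $\Delta^{1/3}(\log p)^{2/3}$ are the competing terms, and both collapse to the single requirement $M_\theta^2(\log p)^{4+3\theta} = o(n^{\theta})$; this is what fixes the power $4+3\theta$, with the bounded-vector CLT rate being of comparable or smaller order. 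Assembling the three bounds through the triangle inequality then gives $\rho_{n,\kappa}\to 0$.
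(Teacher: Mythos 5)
Your proposal is correct and follows essentially the same route as the paper: center the truncated coordinates, apply a Chernozhukov--Chetverikov--Kato type bounded-vector Gaussian approximation (the paper's Lemma \ref{theoremCCk2016}, exploiting that the $2\kappa$-bound kills the tail functional $M_{n,X}(\phi)$), absorb the bias $\sqrt{n}\max_j|\E t_\kappa(X_{ij})|\le \sqrt{n}M_\theta\kappa^{-(1+\theta)}$ via anti-concentration, and handle the covariance distortion $O(M_\theta\kappa^{-\theta})$ via the Gaussian comparison inequality. Your exponent bookkeeping matches the paper's three error terms, namely $(M_\theta^2\log^{4+3\theta}p/n^{\theta})^{1/(4+2\theta)}$ for the CLT and bias terms and $(M_\theta^2\log^{4+3\theta}p/n^{\theta})^{1/(6+3\theta)}$ for the comparison term, so nothing further is needed.
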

\begin{proof}
For simplicity of notation, write $\E_0[X] = X - \E X$. Elementary calculation implies for $q=1,2$,
\begin{equation}
\label{eq:1}
\max_{j\leq p}\E|X_{ij}|^q\textbf{1}\{|X_{ij}|\geq \kappa\}\leq \kappa^{q-2-\theta}M_\theta.
\end{equation}
Recall that $t_\kappa(X_{ij}) = (X_{ij}\wedge \kappa)\vee (-\kappa)$. Then for any $j = 1, \ldots, p$, as $\kappa\rightarrow \infty$, we have
\begin{eqnarray*}
\big|\E |\E_0[t_\kappa(X_{ij})]|^2 - \E |X_{ij}|^2\big| &\leq &\E |X_{ij}|^2\textbf{1}\{|X_{ij}|\geq \kappa\} + (\E |X_{ij}|\textbf{1}\{|X_{ij}|\geq \kappa\})^2 \cr
&\leq &\kappa^{-\theta}M_\theta + \kappa^{-2(1+\theta)}M_\theta^2\rightarrow 0,
\end{eqnarray*}
which implies $\min_{j\leq p}\E |\E_0[t_\kappa(X_{ij})]|^2\geq b - o(1)>0$. Define $L_\kappa = \max_{j\leq p}\E |\E_0[t_\kappa(X_{ij})]|^3$. By a similar argument as above, for any $j = 1, \ldots, p$,
\begin{eqnarray*}
\E |t_\kappa(X_{ij})|^3 &= & \E |X_{ij}|^3\textbf{1}\{|X_{ij}|\leq \kappa\} + \kappa^3\E \textbf{1}\{|X_{ij}|\geq \kappa\} \cr
&\leq & \kappa^{1-\theta}\E |X_{ij}|^{2+\theta} \leq \kappa^{1-\theta}M_\theta.
\end{eqnarray*}
So we have $L_\kappa \leq 4\kappa^{1-\theta}M_\theta := \bar{L}_\kappa$. Let $\phi_\kappa$ be the quantity defined in (\ref{defphin}) with $\bar{L}_n$ replaced by $\bar{L}_\kappa$. We are to apply Lemma \ref{theoremCCk2016} for the i.i.d. vectors $\E_0[t_{\kappa}(X_i)]$, $1 \leq i \leq n$, and evaluate the quantities therein. Let $\kappa$ satisfy $2\kappa<\sqrt{n}/(4\phi_\kappa\log p)$. It can be easily seen that
\begin{eqnarray*}
M_{n,\E_0[t_\kappa(X)]}(\phi_\kappa) &:=& \E \Big[\max\limits_{j\leq p}|\E_0[t_\kappa(X_{1j})]|^3\textbf{1}\Big\{\max\limits_{j\leq p}|\E_0[t_\kappa(X_{1j})]|>\sqrt{n}/(4\phi_\kappa\log p)\Big\}\Big]\cr
&=&0.
\end{eqnarray*}
Let $Y_{\kappa}=(Y_{\kappa,1}, \ldots, Y_{\kappa, p})^\top\in\R^p$ be the analogue centered Gaussian vector with the same covariance matrix as $\E_0[t_{\kappa}(X_i)]$. It follows that
\begin{eqnarray*}
M_{n,Y_\kappa}(\phi_\kappa) &:=&\E \Big[\max\limits_{j\leq p}|Y_{\kappa,j}|^3\textbf{1}\Big\{\max\limits_{j\leq p}|Y_{\kappa,j}|>\sqrt{n}/(4\phi_\kappa\log p)\Big\}\Big] \cr
&\leq & \E |Y_\kappa|_\infty^3 \textbf{1}\{|Y_\kappa|_\infty>2\kappa\} \cr
& \lesssim & \Prob(|Y_\kappa|_\infty > 2\kappa)\kappa^3+ \int_{2\kappa}^\infty \Prob(|Y_\kappa|_\infty>x)x^2 dx =: \text{I}+\text{II}.
\end{eqnarray*}
As has been assumed, $\E |Y_{\kappa, j}|^2 = \E |\E_0[t_\kappa(X_{ij})]|^2$ is upper bounded by $M_\theta^{2/(2+\theta)}$. We have $\text{I} \lesssim p\kappa^3e^{-C\kappa^2/M_\theta^{2/(2+\theta)}}$. Also, elementary manipulation implies $\text{II} \lesssim p\kappa M_\theta^{2/(2+\theta)} e^{-C\kappa^2/M_\theta^{2/(2+\theta)}} + pM_\theta^{3/(2+\theta)}e^{-C\kappa^2/M_\theta^{2/(2+\theta)}}$.
Hence,
\begin{eqnarray*}
M_{n,Y_\kappa}(\phi_\kappa) &\lesssim & p\kappa^3e^{-C\kappa^2/M_\theta^{2/(2+\theta)}}+ p\kappa M_\theta^{2/(2+\theta)} e^{-C\kappa^2/M_\theta^{2/(2+\theta)}} + pM_\theta^{3/(2+\theta)}e^{-C\kappa^2/M_\theta^{2/(2+\theta)}} \cr
&\lesssim & p(n M_\theta)^{3/(2+\theta)}\exp\{-C(n^2M_\theta^{-(2+\theta)}/\log^{4+\theta}p)^{1/(2+\theta)}\log (pn)M_\theta\}\leq n^{-1},
\end{eqnarray*}
where the second line follows in view of (\ref{condpandn}). By Lemma \ref{theoremCCk2016}, we have
\begin{equation}
\label{GA1}
\rho_{n,\kappa}^* := \sup_{t\in\R}|\Prob(\sqrt{n}|\E_0[\hat{\mu}_\kappa]|_\infty\leq t) - \Prob(|Y_\kappa|_\infty\leq t)|\lesssim \Big(\frac{M_\theta^2\log^{4+3\theta}p}{n^\theta}\Big)^{1/(4+2\theta)} + \frac{1}{n}.
\end{equation}
With (\ref{GA1}), we are to consider the error bound between $|\hat{\mu}_\kappa|_\infty$ and $|\E_0[\hat{\mu}_\kappa]|_\infty$. Since $\E X_i=0$, we have
\begin{eqnarray*}
|\E \hat{\mu}_k|_\infty &=& \max\limits_{j\leq p}\big|\E (X_{ij} - \kappa)\textbf{1}\{X_{ij}>\kappa\} + \E (X_{ij} + \kappa)\textbf{1}\{X_{ij}<-\kappa\}\big| \cr
&\leq & \max\limits_{j\leq p}\E |X_{ij}|\textbf{1}\{|X_{ij}|>\kappa\}\leq \kappa^{- (1 + \theta)}M_\theta.
\end{eqnarray*}
By (\ref{eq:1}) and Lemma 2.1 in \cite{CCK2013}, for any $\delta>0$,
\begin{eqnarray}
\label{GA2}
\rho_{n,\kappa}^\diamond &:=&\sup_{t\in\R}\big|\Prob(\sqrt{n}|\hat{\mu}_\kappa|_\infty\leq t) - \Prob(\sqrt{n}|\E_0[\hat{\mu}_\kappa]\big|_\infty\leq t)| \cr
&\leq & \Prob(\sqrt{n}|\hat{\mu}_\kappa - \E_0[\hat{\mu}_\kappa]|_\infty>\delta) + \sup_{t\in\R}\Prob(|\sqrt{n}|\E_0[\hat{\mu}_\kappa]|_\infty - t|\leq \delta) \cr
&\leq & \Prob(\sqrt{n}|\E \hat{\mu}_\kappa|_\infty>\delta) + \rho_{n,\kappa}^* + \sup_{t\in\R}\Prob(||Y_\kappa|_\infty - t|\leq \delta) \cr
&\leq & \rho_{n,\kappa}^*+ 2 \sqrt{n}M_\theta\kappa^{- (1 + \theta)}\sqrt{\log p},
\end{eqnarray}
where the last inequality follows by taking $\delta = \sqrt{n}M_\theta\kappa^{- (1 + \theta)}$.\\
Next we compare $|Y_k|_\infty$ and $|Y|_\infty$. Observe that
\begin{eqnarray*}
\max\limits_{j,l}|\Cov(Y_{\kappa, j}, Y_{\kappa, l}) - \Cov(Y_j, Y_l)|
&= & \max_{j,l}|\Cov(t_\kappa(X_{ij}), t_\kappa(X_{il}) - \Cov(X_{ij}, X_{il})| \cr
&\leq & C  \kappa^{-\theta}M_\theta.
\end{eqnarray*}
By Lemma 3.1 in \cite{CCK2013}, we obtain
\begin{equation}
\label{comparison}
\rho_{n,\kappa}^\circ:=\sup_{t\in\R}\big|\Prob(|Y_\kappa|_\infty\leq t) - \Prob(|Y|_\infty \leq t)\big|\lesssim(M_\theta/\kappa^\theta)^{1/3}(1\vee \log (p\kappa^\theta))^{2/3}.
\end{equation}
Therefore, by (\ref{GA1}), (\ref{GA2}) and (\ref{comparison}), we have
\begin{eqnarray*}
\rho_{n,\kappa} &\leq& \rho_{n,\kappa}^* + \rho_{n,\kappa}^\diamond + \rho_{n,\kappa}^\circ \cr
&\lesssim & \rho_{n,\kappa}^* + \sqrt{n}M_\theta\kappa^{-(1+\theta)}\sqrt{\log p} + (M_\theta/\kappa^\theta)^{1/3}\log^{2/3} p \cr
& = & \rho_{n,\kappa}^* + \Big(\frac{M_\theta^2\log^{4+3\theta}p}{n^\theta}\Big)^{1/(4+2\theta)} + \Big(\frac{M_\theta^2\log^{4+3\theta}p}{n^\theta}\Big)^{1/(6+3\theta)} \cr
& \leq & \frac{1}{n} + 3\Big(\frac{M_\theta^2\log^{4+3\theta}p}{n^\theta}\Big)^{1/(6+3\theta)}\rightarrow 0.
\end{eqnarray*}
\end{proof}

\begin{rmk}
By Propsition \ref{polyprop}, the Gaussian approximation $\rho_n \to 0$ is valid for the traditional sample mean vector if each coordinate has finite $q$-th moment and
\begin{equation}
\label{polynomial grow}
p(\log p)^{3q/2-1}=o(n^{q/2-1}).
\end{equation}
The above condition is optimal up to a logarithmic factor. In fact, if 
\begin{eqnarray}
\label{eq:plarge}
n^{q/2-1} = o (p (\log p)^{-2-q/2}), 
\end{eqnarray}
and $X_{ij}$ are i.i.d.~symmetric random variables with $\E X_{ij} = 0$, $\E |X_{ij}|^2 = 1$ and the tail probability $\Prob(X_{ij}\geq x) = x^{-q}(\log x)^{-2}$, $x\geq x_0$, then 
\begin{eqnarray}
\label{eq:rho1}
 \rho_n \to 1; 
\end{eqnarray} see the discussion in Remark 2 of \citet{Zhang2017}. Here using the truncated sample mean, Theorem \ref{theoremGA} allows $p$ to grow exponentially with $n$. For example, with only finite third moment, i.e., if $M_1 =\max_{j \leq p} \E |X_{ij}|^3 = O(1)$, $p$ can be as large as $e^{o(n^{1/7})}$.
\end{rmk}

\begin{prop}
\label{polyprop}
Assume (\ref{eq:qmt}) and there exists some constant $b>0$ such that $\Var (X_{ij}) \geq b$ for all $j =1, \ldots, p$. Then $\rho_n \rightarrow 0$ as $n\rightarrow \infty$ under   
\begin{equation}
\label{polycond}
p (\log p)^{3q/2 -1} = o(n^{q/2 -1}).
\end{equation}
\end{prop}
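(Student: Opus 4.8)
My plan is to deduce the Proposition from the Gaussian approximation already proved for the truncated sample mean, by controlling the discrepancy between $\hat\mu$ and a truncated version. After centering we may assume $\mu=0$; since $q>3$, Lyapunov's inequality gives $M_1:=\max_{j\le p}\E|X_{ij}|^3\le C^{3/q}<\infty$, while $\E X_{ij}^2=\Var(X_{ij})\ge b$. Condition (\ref{polycond}) forces $p=o(n^{q/2-1})$ and hence $\log p=O(\log n)$, so $(\log p)^{7}M_1^2=o(n)$. Thus Theorem \ref{theoremGA} applies with $\theta=1$ and $\kappa\asymp(n/\log p)^{1/3}$, and already yields $\rho_{n,\kappa}\to 0$; that is, $\sqrt n|\hat\mu_\kappa|_\infty$ is well approximated by $|Y|_\infty$ with $\Cov(Y)=\Sigma$, which is exactly the target law in $\rho_n$.

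It remains to pass from $\hat\mu_\kappa$ to $\hat\mu$. As in the derivation of (\ref{GA2}), I would write, for any $\delta>0$,
\[
\rho_n\le \rho_{n,\kappa}+\Prob\big(\sqrt n|\hat\mu-\hat\mu_\kappa|_\infty>\delta\big)+\sup_{t\in\R}\Prob\big(\,\big||Y|_\infty-t\big|\le\delta\big),
\]
and invoke the anti-concentration bound (Lemma 2.1 in \cite{CCK2013}), so that the last term is $\lesssim\delta\sqrt{\log p}$ because $\min_j\Var(Y_j)\ge b$. The whole problem then reduces to showing $\sqrt n|\hat\mu-\hat\mu_\kappa|_\infty=o(\delta)$ in probability for some $\delta=\delta_n\to0$ with $\delta\sqrt{\log p}\to0$. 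Writing $D_{ij}:=X_{ij}-t_\kappa(X_{ij})$, which is supported on $\{|X_{ij}|>\kappa\}$, I split it into mean and centered parts. The deterministic shift contributes $\sqrt n|\E\hat\mu_\kappa|_\infty\le \sqrt n\,C\kappa^{1-q}$, which is negligible for $q>3$ once $\kappa\asymp(n/\log p)^{1/3}$.

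The crux is the centered fluctuation $S_j:=n^{-1/2}\sum_{i=1}^n(D_{ij}-\E D_{ij})$. A naive second-moment and union bound gives only $\E\max_j|S_j|\lesssim\sqrt{p}\,\kappa^{1-q/2}$, which forces the suboptimal requirement $p\lesssim n^{(q-2)/3}$; the sharp scale $n^{q/2-1}$ in (\ref{polycond}) instead comes from the single largest observation. I would therefore introduce a second, higher truncation at level $\kappa'\asymp(np)^{1/q}(\log(pn))^{\eps}$ with a small $\eps>0$, chosen so that $\Prob(\max_{i,j}|X_{ij}|>\kappa')\le np\,C\kappa'^{-q}=o(1)$, and replace $D_{ij}$ by $D_{ij}':=D_{ij}\textbf{1}\{|X_{ij}|\le\kappa'\}$, which agrees with $D_{ij}$ off a null-probability event and is bounded by $2\kappa'$. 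Since $\Var(D_{ij}')\le\E D_{ij}^2\le C\kappa^{2-q}$, Bernstein's inequality with a union bound over the $p$ coordinates yields, with high probability, $\max_j|S_j|\lesssim \sqrt{\kappa^{2-q}\log p}+\kappa'\log p/\sqrt n$. The first term vanishes for $q>2$; the second equals $n^{1/q-1/2}p^{1/q}(\log(pn))^{\eps}\log p$, and taking the $q$th root of (\ref{polycond}) shows $p^{1/q}(\log p)^{3/2-1/q}=o(n^{1/2-1/q})$, so for $\eps\le 1/2-1/q$ (available since $q>3$) this term tends to $0$. Choosing $\delta$ to dominate both bounds while keeping $\delta\sqrt{\log p}\to0$ then closes the argument.

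I expect the control of the centered fluctuation $S_j$ to be the main obstacle, for two reasons. First, one must resist the lossy union bound and instead localize the heavy tail through the extreme truncation level $(np)^{1/q}$, since only this captures the fact that the non-Gaussian distortion of $\sqrt n|\hat\mu|_\infty$ is driven by the largest single entry among the $np$ observations. Second, one must track the logarithmic powers carefully — those arising from the Bernstein bounded term, the anti-concentration factor $\sqrt{\log p}$, and the interplay of $\kappa$ and $\kappa'$ — in order to land on the precise exponent $3q/2-1$ in (\ref{polycond}) rather than a cruder polylogarithmic factor.
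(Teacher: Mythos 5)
Your route is genuinely different from the paper's: the paper proves Proposition \ref{polyprop} by applying Lemma \ref{theoremCCk2016} \emph{directly to the untruncated} $X_i$, choosing an inflated constant $\bar L_n\geq L_n$ to balance the two terms $(\bar L_n^2\log^7p/n)^{1/6}$ and $M_{n,X}(\phi_n)/\bar L_n\lesssim p\tau^{3-q}/\bar L_n$, whereas you route through Theorem \ref{theoremGA} and then compare $\hat\mu$ with $\hat\mu_\kappa$. Your reduction is sound in outline (the centering, the Lyapunov step, the anti-concentration splitting, the extra truncation at $\kappa'$ to escape the lossy union bound), but there is a concrete gap at the very end: you correctly state that you need $\delta\sqrt{\log p}\to0$, and your $\delta$ must dominate the Bernstein bounded term $\kappa'\log p/\sqrt n$, yet you only verify that $\kappa'\log p/\sqrt n\to0$ and never check $\kappa'(\log p)^{3/2}/\sqrt n\to0$. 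With $\kappa'\gtrsim(np)^{1/q}$ (which is forced by your requirement $np\,\kappa'^{-q}=o(1)$), one has $\kappa'(\log p)^{3/2}/\sqrt n\gtrsim\big(p(\log p)^{3q/2}/n^{q/2-1}\big)^{1/q}$, so your argument needs $p(\log p)^{3q/2}=o(n^{q/2-1})$ --- one full power of $\log p$ stronger than (\ref{polycond}). This is a genuine gap: e.g.\ $p\asymp n^{q/2-1}(\log n)^{1-3q/2}/\log\log n$ satisfies (\ref{polycond}) but not your condition. No choice of the slack $\eps$ or of a slowly growing factor in $\kappa'$ repairs this, because the loss comes from demanding that the event $\{\max_{i,j}|X_{ij}|>\kappa'\}$ have vanishing probability, which pins $\kappa'$ at $(np)^{1/q}$.

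The paper avoids this last $\log p$ precisely because Lemma \ref{theoremCCk2016} does not require the tail event to be asymptotically null; it only requires the tail functional $M_{n,X}(\phi_n)\lesssim p\tau^{3-q}$ to be small \emph{relative to} $\bar L_n\asymp\tau^3\log p/n$, i.e.\ $p\tau^{-q}n/\log p\to0$, which permits the effective threshold $\tau\asymp(pn/\log p)^{1/q}$ rather than $(pn)^{1/q}$; feeding $\bar L_n\asymp\tau^3\log p/n$ back into $(\bar L_n^2\log^7p/n)^{1/6}\asymp\tau(\log p)^{3/2}/\sqrt n$ then yields exactly the exponent $3q/2-1$ in (\ref{polycond}). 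To close your argument at the same sharpness you would have to replace the ``no large observation with probability $1-o(1)$'' step by a comparison in which the exceedances beyond $\kappa'$ are controlled in an averaged (third-moment-relative-to-$\bar L_n$) sense rather than almost surely excluded --- at which point you have essentially reconstructed the paper's direct application of Lemma \ref{theoremCCk2016}. As written, your proof establishes a slightly weaker proposition; the remaining steps (the bound $\sqrt n|\E\hat\mu_\kappa|_\infty\lesssim\sqrt nC\kappa^{1-q}$, the variance bound $\E D_{ij}^2\lesssim\kappa^{2-q}$, and the mean shift incurred when passing from $D_{ij}$ to $D_{ij}'$, which you should state explicitly but which is harmless) all check out.
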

\begin{proof}
We apply Lemma \ref{theoremCCk2016}. Recall the definitions of the quantities $\phi_n$, $M_{n, X}(\phi_n)$ and $L_n$ therein. Denote $\tau = \sqrt{n}/(4\phi_n \log p)$ and substitute $\phi_n=K_2(\bar{L}_n^2\log^4 p/n)^{-1/6}$ with some $\bar{L}_n\geq L_n$. We can obtain
\begin{equation}
\label{tau}
\tau = (n\bar{L}_n/\log p)^{1/3}/(4K_2)
\end{equation}
Since $\max_{j\leq q}\E |X_{ij}|^q<\infty$, by the Bonferroni technique and Markov's inequality,
\begin{equation}
\label{M}
M_{n,X}(\phi_n)
= \Prob(|X_i|_\infty>\tau)\tau^3 + 3\int_{\tau}^\infty \Prob(|X_i|_\infty>x)x^2 dx
\lesssim  p \tau^{3-q}.
\end{equation}
Choose $\bar{L}_n = \max_{j \leq p}\E|X_{1j}|^3 \cdot (p^{3/q}(\log p)^{1-3/q}n^{3/q-1})^{1-\chi}(n^{1/2}(\log p)^{-7/2})^\chi$, where $(1-3/q)/(3/2-3/q)<\chi<1$ so that $\bar{L}_n \geq L_n$ is ensured. By (\ref{tau}) and (\ref{M}), elementary calculation indicates that $\rho_n \rightarrow 0$ under (\ref{polycond}).
\end{proof}

%

\section{Construction of Simultaneous Confidence Intervals}
\label{sectionSCI}
Based on the Gaussian approximation result for the truncated sample mean (cf. Theorem \ref{theoremGA}), we are able to construct simultaneous confidence intervals (SCIs) of $\mu$. Given a confidence level $\alpha\in(0,1)$, the $(1-\alpha)$ SCIs can be constructed by
\begin{align}
\label{SCIform}
\mathcal{C}_{\alpha, \kappa}= \big\{\nu = (\nu_1, \ldots, \nu_p)^\top \in\R^p : \big|\sum_{i=1}^n t_\kappa(X_i - \nu)\big|_\infty \leq \sqrt{n}c_{1-\alpha}\big\}
\end{align}
where $c_{1-\alpha}$ is the cutoff value determined by (\ref{cutoff1}) below. Note that for every $j = 1, \ldots, p$, $f_j(y) := \sum_i t_\kappa (X_{ij} - y)$ is a non-increasing and continuous function of $y$ and it is lower and upper bounded by $-n\kappa$ and $n\kappa$ respectively. Assume $n \kappa > \sqrt{n}c_{1-\alpha}$. Let $l_j, u_j$ be the solutions to the equation $f_j(y) = \pm \sqrt{n}c_{1-\alpha}$. Then the SC for $\mu_j$ is $[l_j, u_j]$, $1 \le j \le p$.

Note that $|\sum_{i} t_\kappa(X_i - \nu)|_\infty = 0$ gives the Huber estimate.

Similarly, the null hypothesis $H_0: \mu = \mu_0$ can be rejected at level $\alpha$ if $|\sum_{i} t_\kappa(X_i - \mu_0)|_\infty \geq \sqrt{n}c_{1-\alpha}$.

Now we shall determine the cutoff value $c_{1-\alpha}$ so that the SCIs given by (\ref{SCIform}) is asymptotically valid. If $\Cov (X_i)=\Sigma$ is known, by Theorem \ref{theoremGA}, the result
\begin{equation}
\label{GAuse}
\sup_{t\in\R}\Big|\Prob\Big(\frac{1}{\sqrt{n}}\Big|\sum_{i=1}^n t_\kappa(X_i - \mu)\Big|_\infty\leq t\Big) - \Prob(|Y|_\infty\leq t)\Big|\rightarrow 0
\end{equation}
implies that we can compute $c_{1-\alpha}$ by
\begin{align}
\label{cutoff1}
c_{1-\alpha} = \inf \{t\in\R: \Prob(|Y|_\infty \leq t)\geq 1-\alpha\}.
\end{align}
When $\Sigma$ is unknown, one natural approach is to use a consistent estimator. Estimation of covariance matrices in high dimensions is highly nontrivial. The restriction mainly lies in the requirement of extra structural assumptions such as bandedness or sparsity.

Now we propose an innovative method to estimate the cutoff value $c_{1-\alpha}$ which can avoid the estimation of covariance matrices and is very convenient to implement. For simplicity, suppose the sample size is even denoted by $n = 2m$. Let $\pi = (\pi(1), \ldots, \pi({n}))$ be a permutation of $[n] = \{1, 2, \ldots, n\}$ and define $Z_{\pi, i} = (X_{\pi(i)} - X_{\pi(m+i)})/\sqrt{2}$ for $i = 1, \ldots, m$. By such construction, $Z_{\pi, 1},\ldots, Z_{\pi, m}$ are then i.i.d.~symmetric random vectors with covariance matrix $\Sigma$. By Theorem \ref{theoremGA} and the fact that $\E t_\kappa(Z_{\pi, i}) = 0$, we have
\begin{align}
\label{GAofZ}
\sup_{t\in\R}\big|\Prob(\sqrt{m}|\bar{Z}_{\pi, \kappa}|_\infty\leq t)- \Prob(|Y|_\infty\leq t)\big|\rightarrow 0,
\end{align}
where $\bar{Z}_{\pi, \kappa} = \sum_{i = 1}^m Z_{\pi, i}/m$. The result (\ref{GAofZ}) provides theoretical support for the validity of the procedure estimating $c_{1-\alpha}$ by the $(1-\alpha)$ quantile of $\sqrt{m}|\bar{Z}_{\pi,\kappa}|$. Let $\Pi_{n}$ be the collection of all permutations of $[n]$ and let $\pi_1, \ldots, \pi_J$ be i.i.d. uniformly sampled from $\Pi_{n}$. Assume that
the sampling process $(\pi_i)_{i \geq 1}$ is independent of $(X_i)$. Define
\begin{align}
\label{resampledistribution}
F_J(t) = \frac{1}{J}\sum_j^J \textbf{1}\{\sqrt{m}|\bar{Z}_{\pi_j ,\kappa}|_\infty\leq t\},
\end{align}
where $\bar{Z}_{\pi_j, \kappa} = \sum_{i=1}^m Z_{\pi_j, \kappa}/m$. We can obtain the empirical $(1-\alpha)$-th quantile of $F_J(t)$ by $\hat q_{1-\alpha} = \inf\{t\in\R : F_J(t)\geq 1-\alpha\}$ and estimate $c_{1-\alpha}$ by $\hat q_{1-\alpha}$.  The simultaneous confidence interval for $\mu$ can be constructed as
\begin{align}
\mathcal{C}_{\alpha, \kappa}^\dag= \big\{\nu\in\R^p : \big|\sum_{i} t_\kappa(X_i - \nu)\big|_\infty \leq \sqrt{n} \hat q_{1-\alpha}\big\}.
\end{align}


\section{Appendix}
Lemma \ref{theoremCCk2016} follows from Theorem 2.1 in \cite{CCK2014} for the i.i.d. case, which is useful to prove Theorem \ref{theoremGA}.
For the i.i.d. random vectors $X_i$, define $L_n = \max\limits_{j\leq p}\E |X_{ij}|^3$ and for $\phi>1$, define
\begin{equation}
\label{defMX}
M_{n,X}(\phi) = \E \Big[\max\limits_{j\leq p}|X_{1j}|^3\textbf{1}\Big\{\max\limits_{j\leq p}|X_{1j}|>\sqrt{n}/(4\phi\log p)\Big\}\Big].
\end{equation}
Let $Y$ be a centered Gaussian vector with $\Cov (Y) = \Sigma$. We can define $M_{n,Y}(\phi)$ similarly with $X_{1j}$ replaced by $Y_{j}$ in (\ref{defMX}). Let
\begin{equation}
M_n(\phi) := M_{n,X}(\phi) + M_{n,Y}(\phi).
\end{equation}

\begin{lem}
\label{theoremCCk2016}
Suppose that there exists some constant $b>0$ such that $\min_{j \leq p} \E X_{ij}^2\geq b$. Then there exist constants $K_1, K_2>0$ depending only on $b$ such that for every constant $\bar{L}_n\geq L_n$,
\begin{align}
\rho_n\leq K_1\bigg[\Big(\frac{\bar{L}_n^2\log^7 p}{n}\Big)^{1/6} + \frac{M_n(\phi_n)}{\bar{L}_n}\bigg]
\end{align}
with
\begin{align}
\label{defphin}
\phi_n = K_2\Big(\frac{\bar{L}_n^2\log^4 p}{n}\Big)^{-1/6}.
\end{align}
\end{lem}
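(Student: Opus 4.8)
The plan is to derive the bound as the i.i.d.\ specialization of Theorem~2.1 of \cite{CCK2014}, exactly as the surrounding text announces; there is no new probabilistic content to produce, only an alignment of notation. That theorem treats independent, centered vectors $X_1,\dots,X_n$ and bounds $\rho_n$ by the sum of two pieces: a ``bulk'' Gaussian-approximation term that is a fixed power of $(\text{third-moment functional})^{2}\,\log^{7}p/n$, and a ``tail'' term equal to a truncated-third-moment functional divided by the moment bound, the two being linked through a single smoothing/truncation parameter $\phi$. The constants there depend only on the common variance lower bound $b$, since $b$ enters solely through the Gaussian anti-concentration and comparison inequalities (Lemmas~2.1 and~3.1 of \cite{CCK2013}) invoked in its proof. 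First I would simply apply this theorem to the i.i.d.\ array $X_1,\dots,X_n$ with $\Cov(Y)=\Sigma=\Cov(X_1)$.

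Second I would carry out the specialization. Under i.i.d.\ sampling every per-index average in the general statement collapses: the averaged third moment $\max_{j\le p} n^{-1}\sum_{i=1}^n\E|X_{ij}|^{3}$ becomes $L_n=\max_{j\le p}\E|X_{1j}|^{3}$, and the averaged truncated-moment functional $n^{-1}\sum_{i=1}^n\E[\max_j|X_{ij}|^{3}\mathbf 1\{\cdots\}]$ becomes the single-vector quantity $M_{n,X}(\phi)$ of (\ref{defMX}), with the truncation threshold identified as $\sqrt n/(4\phi\log p)$; the Gaussian analogue yields $M_{n,Y}(\phi)$, and their sum is $M_n(\phi)$. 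With these identifications the general bound reads verbatim as $\rho_n\le K_1[(\bar L_n^{2}\log^{7}p/n)^{1/6}+M_n(\phi_n)/\bar L_n]$ for every $\bar L_n\ge L_n$, the freedom to replace $L_n$ by an arbitrary upper bound $\bar L_n$ being inherited directly from CCK. The designated value $\phi_n=K_2(\bar L_n^{2}\log^{4}p/n)^{-1/6}$ is precisely CCK's balancing choice; I would only need to record that it is admissible, i.e.\ $\phi_n>1$, which holds whenever $\bar L_n^{2}\log^{4}p\le K_2^{6}n$ --- the sole regime in which the assertion is nonvacuous --- and that the truncation threshold it induces is $\sqrt n/(4\phi_n\log p)\asymp(n\bar L_n/\log p)^{1/3}$, matching the level $\tau$ used in Proposition~\ref{polyprop}.

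Should a self-contained argument be wanted instead of a citation, the engine is the standard three-step CCK scheme: replace $\max_{j}$ by the soft-max $\phi^{-1}\log(\sum_{j=1}^{p}e^{\phi x_j})$ together with a smoothed indicator at a second scale, split each $X_i$ into its truncation at level $\sqrt n/(4\phi\log p)$ plus a remainder, run a Slepian--Stein interpolation on the bounded truncated part to compare it with the matching Gaussian $Y$, and control the discarded remainder through exactly the functional $M_n(\phi)$; the Gaussian anti-concentration inequality then converts the smoothing errors into logarithmic factors, and jointly optimizing the two smoothing scales produces the exponent pattern $(\bar L_n^{2}\log^{7}p/n)^{1/6}$. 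The only genuinely delicate point, in either route, is this last bookkeeping: confirming that CCK's generic moment constants, once specialized and fed the stated $\phi_n$, reproduce the precise powers $\log^{7}p$ in the bulk term and $\log^{4}p$ inside $\phi_n$ with constants depending on $b$ alone. Once the functionals and the truncation threshold are matched to (\ref{defMX}) and $L_n$, the remainder is elementary.
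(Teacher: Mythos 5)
Your proposal matches the paper exactly: the paper offers no independent proof of this lemma, justifying it with the single sentence that it ``follows from Theorem 2.1 in \cite{CCK2014} for the i.i.d.\ case,'' which is precisely the citation-and-specialization route you take (your notational bookkeeping and the sanity check that $\phi_n>1$ are more detail than the paper provides). No gaps; the self-contained sketch in your last paragraph is optional extra credit, not something the paper attempts.
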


\bibliographystyle{unsrtnat}
\bibliography{Truncation}
\end{document}